\newtheorem*{theorem*}{Theorem}
\newtheorem{theorem}{Theorem}[section]
\newtheorem{lemma}[theorem]{Lemma}
\newtheorem*{rem*}{Remark}
\theoremstyle{remark}
\DeclareMathOperator*{\clos}{clos}
\DeclareMathOperator*{\re}{Re}
\DeclareMathOperator*{\sgn}{sgn}
\def\leq{\leqslant}
\def\le{\leqslant}
\def\geq{\geqslant}
\def\ge{\geqslant}
\begin{document}

\title{Interpolation for intersections of Hardy-type spaces}

\author{S. V.  Kislyakov \and I. K. Zlotnikov}

\address{St. Petersburg Department of V.A. Steklov \newline
	    Mathematical Institute of the Russian Academy of Sciences \newline
	    27 Fontanka, St. Petersburg 191023, Russia
\newline
	    Chebyshev Laboratory, St. Petersburg State University, \newline
 	    14th Line V.O., 29B, Saint Petersburg 199178 Russia \newline }

\keywords{Real interpolation, uniform algebra, $w^*$-Dirichlet algebra, analytic cut-off functions}

\thanks{This research was supported by the Russian Science Foundation (grant No. 18-11-00053)}

\subjclass{Primary 30H10, 46J20; Secondary 46B70, 30H50}

\email{skis@pdmi.ras.ru}
\email{zlotnikk@rambler.ru}

\begin{abstract}
Let $(X,\mu)$ be a space with a finite measure $\mu$,  let $A$ and $B$ be $w^*$-closed subalgebras of $L^{\infty}(\mu)$, and let $C$ and $D$ be closed subspaces of $L^p(\mu)$ ($1<p<\infty$) that are modules over $A$ and $B$, respectively. Under certain additional assumptions, the couple
$(C\cap D, C\cap D\cap L^{\infty}(\mu))$ is $K$-closed in $(L^p(\mu), L^{\infty}(\mu))$.

This statement covers, in particular, two cases analyzed previously: that of Hardy spaces on the two-dimensional torus and that of the coinvariant subspaces of the shift operator on the circle. Next, many situations when $A$ and $B$ are $w^*$-Dirichlet algebras also fit in this pattern.
\end{abstract}
\maketitle

\section{Introduction}
Let $(X_0, X_1)$ be a compatible couple of Banach spaces, and let $Y_0, Y_1$ be closed subspaces of $X_0$ and $X_1$, respectively. We remind the reader that the couple $(Y_0, Y_1)$ is said to be $K$-closed in $(X_0, X_1)$ if, whenever
$Y_0+Y_1\ni x=x_0+x_1$ with $x_i\in X_i$, $i=0,1$, we also have $x=y_0+y_1$ with $y_i\in Y_i$ and
$\|y_i\|\leq C\|x_i\|$, $i=0,1$.
  
Obviously, this property implies that the interpolation spaces of the real method for the couple $(Y_0, Y_1)$ are equal to the intersection of the corresponding interpolation spaces for $(X_0, X_1)$ with the sum $Y_0+Y_1$. So, whenever we know the interpolation spaces for the latter couple, $K$-closedness makes interpolation for the former one quite easy.  We recall (see \cite{P} or the survey \cite{K2}) that $K$-closedness does occur in the scale of the Hardy spaces on the unit circle (viewed as subspaces of the corresponding Lebesgue spaces), but now we are interested in the following two more complicated results (in them we assume that $1<p<\infty$, though, in fact, some information beyond this condition is available).
\begin{enumerate}
  \item[i)] The couple $(H^p(\mathbb{T}^2), H^{\infty}(\mathbb{T}^2))$ is $K$-closed in the couple
$(L^p(\mathbb{T}^2), L^{\infty}(\mathbb{T}^2))$ (see \cite{KX}).

  \item [ii)] For an inner function $\theta$ on the unit circle, the couple
 $(H^p\cap\theta\overline{H^p}, H^{\infty}\cap\theta\overline{H^{\infty}})$ is $K$-closed in
 $(L^p(\mathbb{T}), L^{\infty}(\mathbb{T}))$ (see \cite{KZ}).
 \end{enumerate}
It should be noted that an analog of i) in dimensions $n>2$ is an open problem.

Surprisingly, the proofs of Statements i) and ii) have turned out to be quite similar, signalizing that these facts might be particular cases of a more general claim. Such a claim exists indeed and looks roughly like this. Again, here $1<p<\infty$.

\begin{theorem*}
Let $(X,\mu)$ be a space with a finite measure $\mu$,  let $A$ and $B$ be $w^*$-closed subalgebras of $L^{\infty}(\mu)$, and let $C$ and $D$ be closed subspaces of $L^{\infty}(\mu)$ that are Banach modules over $A$ and $B$, respectively. {\rm Under certain additional assumptions}, the couple
$(\clos_{L^p(\mu)}C\cap\clos_{L^p(\mu)} D, C\cap D)$ is $K$-closed in $(L^p(\mu), L^{\infty}(\mu))$.
\end{theorem*}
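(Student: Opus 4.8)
The plan I would follow has three stages: reducing $K$-closedness to a single level-set splitting, carrying out that splitting by an ``analytic cut-off'' whenever the cut-off can be found inside $C\cap D$, and dealing with the case where it cannot.

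\emph{Stage 1 (reduction).} Since $C\cap D\cap L^\infty(\mu)\subseteq C\cap D\subseteq L^p(\mu)$, every element to be split already lies in $C\cap D$. The $K$-functional of $(L^p(\mu),L^\infty(\mu))$ on a finite measure space is given explicitly by $K(t,f)=\inf_{\lambda>0}\bigl(\|(|f|-\lambda)_+\|_p+t\lambda\bigr)$, the infimum being essentially attained by the truncation of $f$ at height $\lambda$. Using this, together with the standard freedom to perform the cut-off at a fixed multiple of the near-optimal level (which replaces $\|(|f|-\lambda)_+\|_p$ by the more convenient $\|f\|_{L^p(\{|f|>\lambda\})}$), one reduces the theorem to the following claim: there is a constant $C$ such that for every $f\in C\cap D$ and every $\lambda>0$ one can write $f=f_0+f_1$ with $f_1\in C\cap D\cap L^\infty(\mu)$, $\|f_1\|_\infty\le C\lambda$, and $f_0\in C\cap D$, $\|f_0\|_{L^p}\le C\|f\|_{L^p(\{|f|>\lambda\})}$.

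\emph{Stage 2 (the analytic cut-off).} I would look for the decomposition in the form $f_1=uf$, $f_0=(1-u)f$, where $u$ has modulus $m:=\min(1,\lambda/|f|)$ and belongs to (a space playing the role of) the multiplier algebra of $C\cap D$; then $f_1=uf$ and $f_0=f-uf$ both lie in $C\cap D$ and $|f_1|=\min(|f|,\lambda)\le\lambda$ is automatic, so the only real point is the $L^p$-bound on $f_0$. On $\{|f|>\lambda\}$ it is trivial, $|f_0|\le|f|+|uf|\le2|f|$. On $\{|f|\le\lambda\}$ one has $m\equiv1$, hence $u=e^{i\varphi}$ with $\varphi$ the conjugate function (relative to the algebra in question) of $\log m=-\log^+(|f|/\lambda)$, a function supported on $\{|f|>\lambda\}$; there $|1-u|\le|\varphi|$ pointwise, and invoking the boundedness of this conjugation on $L^p(\mu)$ for $1<p<\infty$ (which is precisely where the restriction on $p$ enters) together with the elementary inequality $(\log s)^p\le s^p$ for $s\ge1$ gives $\|f_0\|_{L^p(\{|f|\le\lambda\})}\le C_p\lambda\,\|\log^+(|f|/\lambda)\|_{L^p}\le C_p\|f\|_{L^p(\{|f|>\lambda\})}$. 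Thus $\|f_0\|_{L^p}\le C_p\|f\|_{L^p(\{|f|>\lambda\})}$, as required. When $D=L^\infty(\mu)$ and $C$ is a module over a single $w^*$-Dirichlet algebra $A$, $u$ is just the outer function of $A$ of modulus $m$, and this reproduces the classical $H^p$-interpolation statement; more generally the additional hypotheses should be exactly what guarantees that such a $u$ can be found inside $C\cap D$, which is the case for $H^p(\mathbb{T}^2)$, where $A\cap B=H^\infty(\mathbb{T}^2)$ contains the required outer function and the relevant ``conjugation'' is an iterate of two one-variable Riesz projections, hence bounded on $L^p(\mathbb{T}^2)$.

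\emph{Stage 3 (the main obstacle).} The difficulty that the ``additional assumptions'' must absorb is that $A\cap B$ can be too small to carry any cut-off at all --- as in the coinvariant-subspace case $H^p\cap\theta\overline{H^p}$, where $A=H^\infty$, $B=\overline{H^\infty}$ and $A\cap B=\mathbb{C}$. There I would instead produce \emph{two} cut-offs, $u_A\in A$ and $u_B\in B$, both of modulus $m$, split $f$ according to the module structures of $C$ over $A$ and of $D$ over $B$, multiply each piece by the factor appropriate to the side it came from, and patch the results back into a single decomposition $f=f_0+f_1$. Arranging that the patched $f_1$ simultaneously lies in $C\cap D$, is bounded by $C\lambda$, and leaves $f_0$ with $L^p$-norm controlled by $\|f\|_{L^p(\{|f|>\lambda\})}$ is the heart of the matter; pushing it through should require the precise compatibility built into the hypotheses --- density relations of the type $\clos_{L^p(\mu)}(A\overline{A})=L^p(\mu)$, the reproducing identities $C=\clos_{L^p(\mu)}\bigl(C\cap L^\infty(\mu)\bigr)$ and its analogue for $D$, and a matching of the two conjugation operators --- and it is at this stage that one verifies that $w^*$-Dirichlet algebras $A$, $B$ meet what is needed, so that the general statement follows.
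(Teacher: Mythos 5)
Your Stages 1 and 2 are a reasonable sketch of the one-algebra case (classical $H^p$ interpolation via an outer cut-off of modulus $\min(1,\lambda/|f|)$), but the content of the theorem is precisely the two-algebra case, and that is the part you explicitly leave open in Stage~3. Your proposal to produce cut-offs $u_A\in A$ and $u_B\in B$, ``split $f$ according to the module structures,'' and ``patch'' is where the difficulty actually lives, and you do not say how the patching would go or why the patched piece would land in $C\cap D$ with the right bounds. In fact, a function $f\in C_p\cap D_p$ is simultaneously in both modules, so it is unclear what it even means to split it ``according to the module structures'' before multiplying by the respective cut-offs; there is no canonical decomposition of $f$ into a $C$-part and a $D$-part on the primal side.

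The paper resolves exactly this obstacle by dualizing first: $K$-closedness of $(C_p\cap D_p,\,C\cap D)$ in $(L^p,L^\infty)$ is equivalent to $K$-closedness of the annihilator couple
$\bigl(\clos(C^{\perp}+D^{\perp}),\,\clos(C^{\perp,q}+D^{\perp,q})\bigr)$ in $(L^1,L^q)$.
On the dual side the intersection becomes a \emph{sum}, which has a natural splitting device: the paper's Condition $(\beta_p)$ posits a projection $P$ of $L^q$ onto $C^{\perp,q}$ that is also weak $(1,1)$, and Condition $(\gamma_p)$ is the compatibility $P(D^{\perp,q})\subset D^{\perp,q}$. Given $f=g+h\in C^{\perp,q}+D^{\perp,q}$ with $g\in L^1$, $h\in L^q$, one applies $P$ to separate the $C^\perp$-piece, uses a cut-off $\Phi\in\overline A$ (Lemma~\ref{split}) to control $Pg$ at level $\lambda$, and then a second cut-off $\Phi\in\overline B$ applied to $u=(I-P)f\in D^{\perp,q}$. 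Both cut-offs come from the same Lemma~\ref{outFunction}, which itself rests only on Condition $(\alpha_p)$ (your outer-function heuristic is the correct intuition for that lemma). So the ``additional assumptions'' you are missing are not density relations of the form $\clos_{L^p}(A\overline A)=L^p$ or a ``matching of conjugations''; they are the existence of a weak-$(1,1)$ projection onto $C^{\perp,q}$ and its invariance on $D^{\perp,q}$, and the whole mechanism operates on annihilators, not on $C\cap D$ directly.

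A smaller issue: in Stage~1 you implicitly assume that any element to be split ``already lies in $C\cap D$,'' but the first coordinate of the couple is $C_p\cap D_p$, the $L^p$-closure; elements there need not be bounded, and the reduction must account for that. This is harmless but should be stated. The substantive gap remains Stage~3.
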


The \textit{additional assumptions} will be described below and, among other things, will require that some analogs of  the harmonic conjugation operator relative to the algebras $A$ and $B$ have the usual properties, as is, for instance, in the case of $w^*$-Dirichlet algebras. However, the condition for $A$ and $B$ to be  $w^*$-Dirichlet is too restrictive (in particular, we do not insist that a multiple of $\mu$ represent some multiplicative linear functional on either $A$ or $B$). Also, note that the proofs of Statements i) and ii) known previously were based upon the fact that, in those settings, the corresponding harmonic conjugations (or Riesz projections) were classical singular integral operators. In particular, the two proofs  employed Calder{\'o}n--Zygmund decomposition, which is not available in the generality adopted in the theorem. In our presentation, this singular integral operator techniques will be replaced by the use of ``cut-off functions'' belonging to $A$ and $B$. Next, some assumptions about the mutual position of the annihilators of $C$ and $D$ will be made (in the context of the above Statements i) and ii), these assumptions are satisfied trivially).

\section{Description of the assumptions and some examples}
Among the restrictions mentioned above, only the last one will bind the pairs $(A,C)$ and $(B,D)$ together, and the other pertain to these objects taken separately. We discuss these ``individual'' conditions first. Let $\mathcal{E}$ be a $w^*$-closed subalgebra of $L^{\infty}(\mu)$ containing constants (as before, $\mu$ is a finite measure), and let $p$ be as in the theorem. Usually, the algebras we consider will be subject to the following requirement.

{\bf Condition ($\alpha_p$)}. For every nonnegative function $u\in L^p(X,\mu)$, there exists a sequence $\{w_n\}_0^{\infty}$ of functions belonging to $\mathcal{E}$ and such that
\begin{align*}
&\re w_n \ge 0,       \\
&\re w_n\to u \text{ weakly in } L^p(\mu),\text{ and}\\
&\|w_n\|_{L^p} \le C \|u\|_{L^p},  
\end{align*}
with a constant $C$ that may depend only on $p$. 

Observe that, passing to a subsequence, we may assume that the $w_n$ (and not merely their real parts) converge weakly in $L^p(\mu)$, hence some convex combinations of them converge strongly. Another passage to a subsequense ensures also convergence a.e. Clearly, the functions of the last subsequence still have positive real parts that converge strongly and a.e. to $u$. So, we can always strengthen Condition $(\alpha_p)$ in this way for free.

Examples will be discussed later, but now we signalize that this condition can easily be verified in the prototypical case of $\mathcal{E}=H^{\infty}(\mathbb{T})$: it suffices to put 
\begin{equation}\label{trifle}
w_n=u\ast K_n + i\widetilde{u\ast K_n},
\end{equation}
where the $K_n$ are the Fej{\'e}r kernels and tilde indicates harmonic conjugation. 

The next condition will be imposed on modules over algebras in question. Let $\mathcal{F}$ be a $w^*$-closed subspace of $L^{\infty}(\mu)$ that is a module over an algebra $\mathcal{E}$ as above. Throughout the paper, we use the sesquilinear duality $\langle f, g \rangle = \int f \overline{g}.$ Consider the annihilator of $\mathcal{F}$ in $L^1(\mu)$,
\[
\mathcal{F}^{\perp}=\{h\in L^1(\mu)\colon\langle f,h\rangle=0\text{ for all }f\in\mathcal{F}\}.
\]
Denoting the exponent conjugate to $p$ by $q$ and putting $\mathcal{F}^{\perp,q}=L^q(\mu)\cap\mathcal{F}^{\perp}$, we readily see that $\mathcal{F}^{\perp,q}=\{f\in L^q(\mu)\colon \langle f,g\rangle=0\text{ for all }g\in\mathcal{F}_p\}$, where $\mathcal{F}_p$ stands for the $L^p(\mu)$-closure of $\mathcal{F}$.
Next, an easy separation argument shows that $\mathcal{F}^{\perp,q}$ is norm dense in $\mathcal{F}^{\perp}$. The last important observation is that $\mathcal{F}_p$ is a Banach module over $\mathcal{E}$, and $\mathcal{F}^{\perp}$ and $\mathcal{F}^{\perp,q}$ are Banach modules over the algebra $\overline{\mathcal{E}}$.

We are ready to formulate another important condition, which imitates this time some continuity properties of the orthogonal projection of $L^2(\mathbb{T})$ onto $H^2(\mathbb{T})$ (the Riesz projection).

{\bf Condition ($\beta_p$)}. Let $\mathcal{E},\mathcal{F}, p$ and $q$ be as above. We assume that there is a bounded projection $P$ of $L^q(\mu)$ onto $\mathcal{F}^{\perp,q}$ that is also of weak type $(1,1)$:
\begin{equation}\label{weaktype}
    \mu\{x \in X : |Pf(x)| > \lambda \} \le C \frac{\|f\|_{L^1}}{\lambda}
\end{equation}
for all $f\in L^p(\mu)$ and all $\lambda > 0$. As usual, the operator $P$ then extends by continuity to all $f\in L^1(\mu)$ with preservation of the last estimate.

Finally, we state the main result of the paper accurately. Let $A$, $B$, $C$, and $D$ be the objects mentioned in the ``rough'' statement in the Introduction, and let $p$ and $q$ be two conjugate exponents strictly between $1$ and $\infty$.  The spaces $C^{\perp}$, $C^{\perp,q}$ etc. (and all similar spaces for $D$ in place of $C$) are introduced in accordance with the above discussion.

\begin{theorem}\label{main} Suppose that the algebras $A$ and $B$ satisfy Condition $(\alpha_p)$ and that Condition $(\beta_p)$ is fulfilled for the module $C$. Suppose also that the projection $P$ occurring in Condition $(\beta_p)$ has the following property{\rm:}
$$
  P(D^{\perp,q}) \subset D^{\perp,q}. \leqno (\gamma_p)
$$
Then the couple $(C_p\cap D_p, C\cap D)$ is $K$-closed in $(L^p(\mu), L^{\infty}(\mu)).$
\end{theorem}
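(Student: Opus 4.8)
My plan has three stages: reduce $K$-closedness to one truncation inequality; build analytic cut-off functions out of Condition $(\alpha_p)$, which already settles each module separately; and then use the projection $P$ together with Condition $(\gamma_p)$ to make a single cut-off serve both modules at once.

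\emph{Stage 1 (reduction).} I would first note that $C\cap D\subseteq C_p\cap D_p$, so $(C_p\cap D_p)+(C\cap D)=C_p\cap D_p$, and record the standard reformulation: $(C_p\cap D_p,C\cap D)$ is $K$-closed in $(L^p(\mu),L^\infty(\mu))$ if and only if there is a constant $c$ such that for every $f\in C_p\cap D_p$ and every $\lambda>0$ one can find $h\in C\cap D$ with $\|h\|_{L^\infty}\le c\lambda$ and $\|f-h\|_{L^p}\le c\,\bigl\|(|f|-\lambda)_+\bigr\|_{L^p}$. One implication uses the optimal height-$\lambda$ splitting of $f$; for the other, given $f=x_0+x_1$ one takes $\lambda=\|x_1\|_{L^\infty}$ and uses $(|f|-\lambda)_+\le|x_0|$. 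Here one also needs $C\cap D=C_p\cap D_p\cap L^\infty$, which I would deduce from the $w^*$-closedness of $C,D$ and the density of $\mathcal F^{\perp,q}$ in $\mathcal F^\perp$. By homogeneity one may fix $\lambda=1$; write $N:=\bigl\|(|f|-1)_+\bigr\|_{L^p}$.

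\emph{Stage 2 (cut-offs and the one-module case).} Given a $w^*$-closed module $\mathcal F$ over an algebra $\mathcal E$ with $(\alpha_p)$ and a function $f\in\mathcal F_p$, set $u:=\log^+|f|$, so that $u\le(|f|-1)_+$ and $u\in L^p$. Applying the strengthened $(\alpha_p)$ to $u$ yields $w_n\in\mathcal E$ with $\re w_n\ge0$, $\re w_n\to u$ a.e.\ and in $L^p$, and $\|w_n\|_{L^p}\le C\|u\|_{L^p}$. Then $e^{-w_n}\in\mathcal E$, $|e^{-w_n}|=e^{-\re w_n}\le1$, $|1-e^{-w_n}|\le|w_n|$, and a subsequence of $(e^{-w_n})$, being bounded in $L^\infty(\mu)$, converges $w^*$ to some $\varphi\in\mathcal E$ ($\mathcal E$ being $w^*$-closed). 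An Egorov argument using $e^{-\re w_n}\to e^{-u}$ a.e.\ forces $|\varphi|\le e^{-u}=\min(1,1/|f|)$ a.e., while passing to the weak $L^p$-limit in $|1-e^{-w_n}|\le|w_n|$ gives $\|1-\varphi\|_{L^p}\le C\|u\|_{L^p}\le CN$. Since $\mathcal F_p$ is a module over $\mathcal E$, $\varphi f\in\mathcal F_p$; as $|\varphi f|\le\min(|f|,1)\le1$ we also get $\varphi f\in\mathcal F_p\cap L^\infty=\mathcal F$; and estimating $|f-\varphi f|^p=|1-\varphi|^p|f|^p$ separately on $\{|f|\le2\}$ (there it is $\le2^p|1-\varphi|^p$) and on $\{|f|>2\}$ (there $|f|\le2(|f|-1)_+$) yields $\|f-\varphi f\|_{L^p}\le cN$. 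This already proves $K$-closedness of $(\mathcal F_p,\mathcal F)$ in $(L^p,L^\infty)$ from $(\alpha_p)$ alone — an abstract form of the classical Hardy-space fact — and, applied to $(B,D)$, produces $\psi\in B$ with $|\psi|\le\min(1,1/|f|)$, so that $g:=\psi f\in D$, $\|g\|_{L^\infty}\le1$, $\|f-g\|_{L^p}\le cN$.

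\emph{Stage 3 (bringing in $C$), and the main obstacle.} Using $(\beta_p)$ and the reflexivity of $L^p$, I would form $Q:=I-P^{*}$, a bounded projection of $L^p$ onto $C_p$ (indeed the range of $Q$ equals $\ker P^{*}=(C^{\perp,q})^{\perp}=C_p$, because $P$ projects onto $C^{\perp,q}=(C_p)^{\perp}$). Dualizing $(\gamma_p)$ shows $P^{*}(D_p)\subseteq D_p$, hence $Q(D_p)\subseteq D_p$, so $Q$ restricts to a bounded projection of $D_p$ onto $C_p\cap D_p$. For $f\in C_p\cap D_p$ and the cut-off $\psi\in B$ above, put $h:=Q(\psi f)$: then $h$ lies in the range $C_p$ of $Q$ and, since $\psi f\in D_p$, also $h\in D_p$, so $h\in C_p\cap D_p$; and because $f\in C_p=\ker P^{*}$ one has $f-h=Qf-Q(\psi f)=Q\bigl((1-\psi)f\bigr)$, whence $\|f-h\|_{L^p}\le\|Q\|\,\|(1-\psi)f\|_{L^p}\le c\,\|Q\|\,N$. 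The one thing left uncontrolled is $\|h\|_{L^\infty}$: from $h=\psi f+P^{*}\bigl((1-\psi)f\bigr)$ one sees that $h$ is a bounded function plus $P^{*}$ of a function that is merely small in $L^p$, and $Q=I-P^{*}$, although bounded on every $L^s$ with $p\le s<\infty$, is not bounded on $L^\infty$. I expect this to be the hard part, and it is exactly where the \emph{weak type} $(1,1)$ clause of $(\beta_p)$ — not just $L^p$-boundedness of $P$ — must be exploited, in the role played by the Calder\'on--Zygmund decomposition in the proofs of Statements i) and ii). My plan for it is to carry out Stage 3 not at a single height but over the dyadic heights $\lambda=2^k$: at each step the weak-$(1,1)$ bound for $P$ controls the distribution function of the correction $P^{*}((1-\psi)f)$ that spoils boundedness, and the good parts produced at the various heights are summed so that the total good part is genuinely bounded by $c\lambda$ while the total remainder keeps $L^p$-norm at most $cN$. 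Keeping the pieces produced at different heights simultaneously compatible with the $A$- and $B$-module structures, so that $Q$ is legitimately applicable at each step, is the delicate bookkeeping that Conditions $(\alpha_p)$, $(\beta_p)$ and $(\gamma_p)$ are designed to make work.
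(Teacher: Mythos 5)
Your Stages~1 and~2 are sound (Stage~2 is essentially the paper's cut-off Lemma~\ref{outFunction}, with $e^{-w}$ in place of $(1+w)^{-\gamma}$; it does prove $K$-closedness of $(\mathcal F_p,\mathcal F)$ from $(\alpha_p)$ alone). But Stage~3, which is where the theorem actually lives, is not a proof: you yourself point out that $\|h\|_{L^\infty}$ is uncontrolled because $Q=I-P^*$ is not bounded on $L^\infty$, and you offer only a plan (dyadic heights, summing good parts) without carrying it out. That is the whole difficulty of the theorem, not a loose end, so as it stands the argument has a genuine gap. Moreover it is not clear that the dyadic scheme can be made to close: at each height the spoiling term is $P^*$ of an $L^p$-small function, and the weak-$(1,1)$ hypothesis concerns $P$ acting near $L^1$, not $P^*$ acting near $L^\infty$; you have not explained how to convert one into the other while keeping all pieces inside $C_p\cap D_p$.

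The paper sidesteps this obstacle entirely by dualizing at the outset: $K$-closedness of $(C_p\cap D_p,\,C\cap D)$ in $(L^p,L^\infty)$ is equivalent (Pisier) to $K$-closedness of the annihilator couple $\bigl(\clos(C^{\perp}+D^{\perp}),\,\clos(C^{\perp,q}+D^{\perp,q})\bigr)$ in $(L^1,L^q)$. On that side the dangerous endpoint is $L^1$, not $L^\infty$, and the weak-$(1,1)$ clause of $(\beta_p)$ is exactly the right tool: given $f=g+h$ with $g\in L^1$, $h\in L^q$, one applies $P$ to $g$ (weak type controls $Pg$ in $L^{1,\infty}$), splits $Pg$ by the cut-off Lemma~\ref{split} into an $L^q$-piece and an $L^{1,\infty}$-piece, and then corrects by multiplying $u=(I-P)f\in D^{\perp,q}$ by a second cut-off $\Phi\in\overline B$. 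Condition $(\gamma_p)$ is used only to know that $I-P$ preserves $D^{\perp,q}$. No operator ever needs to be bounded on $L^\infty$. If you want to salvage your primal-side approach you would essentially have to rediscover this dualization, or else produce a direct analogue of the Calder\'on--Zygmund correction for $P^*$ near $L^\infty$; neither is done in the proposal.
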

Condition ($\gamma_p$) is precisely the ``coupling'' condition already promised. 

\subsection{The known cases}\label{known} The two results mentioned in the Introduction fit in this pattern.
\subsubsection{Coinvariant subspaces of the shift operator}
On the unit circle with Lebesgue measure, consider the algebras $A=H^{\infty}(\mathbb{T})$ and $B=\overline{A}$. Next, let $C=A$ and $D=\theta B$, where $\theta$ is an \textit{inner function}, i.e., a function belonging to $A$ whose boundary values have modulus $1$ a.e. These objects are easily seen to satisfy conditions $(\alpha_p)$, $(\beta_p)$, and $(\gamma_p)$ for every $p\in (1,\infty)$, which implies Statement ii) in the Introduction. Indeed, see the paragraph after the definition of Condition  $(\alpha_p)$ to see that this condition is fulfilled for $H^{\infty}(\mathbb{T})$ and for
$\overline{H^{\infty}(\mathbb{T})}$. As to the annihilators, we have $C^{\perp}=\bar{z}\overline{H^1(\mathbb{T})}$ and  $D^{\perp}=\theta zH^1(\mathbb{T})$ ($z$ stands for the identity mapping on the circle).  Condition $(\beta_p)$ is satisfied for $C^{\perp}_q$ because of the standard properties of the ``negative'' Riesz projection
\[
Pf(z)=\sum\limits_{j\leq -1}\hat{f}(j)z^j.
\]
Next, clearly, $P(D^{\perp}_q)=\{0\}\subset D^{\perp}_q$, ensuring also $(\gamma_p)$.
\subsubsection{Two-dimensional torus}
In the situation of Statement i) in the Introduction, we take for $A$ the subalgebra of $L^{\infty}(\mathbb{T}^2)$ consisting of all functions that belong to $H^{\infty}(\mathbb{T})$ in the first variable for almost every value of the second, and for $B$ the same but with the roles of the variables interchanged. Next, we take $C=A$ and $D=B$. Again, it is easily seen that Theorem \ref{main} is applicable, yielding Statement i).

Indeed, now $C^{\perp}$ is the subspace of $L^1(\mathbb{T}^2)$ that consists of all functions lying in $\bar{z}\overline{H^1(\mathbb{T})}$ in the first variable for almost every value of the second, and  $D^{\perp}$ is the same but with the roles of the variables interchanged. To verify $(\alpha_p)$ for both $A$ and $B$, we simply do the easy harmonic conjugation construction described above (see formula \eqref{trifle}), but this time in one variable for every fixed value of the other. For the projection $P$ related to $C$ (or, rather, to $C^{\perp}$), we take the orthogonal projection of $L^2(\mathbb{T}^2)$ onto $C^{\perp,2}$ (i.e., we again simply apply the one-dimensional ``negative Riesz projection'' in the first variable). Note that we do have $P(D^{\perp,q})\subset D^{\perp,q}$ (i.e., condition $(\gamma_p)$ is true), but $P(D^{\perp,q})\ne\{0\}$ this time.

\subsection{$w^*$-Dirichlet algebras and beyond} We remind the reader that a $w^*$-Dirichlet algebra is a $w^*$-closed subalgebra $\mathcal{F}$ of $L^{\infty}(\mu)$ that contains the constants and satisfies the following conditions: (a) $\re\mathcal{F}$ is weak-star dense in $L^{\infty}_{\mathbb{R}}(\mu)$; (b) the measure $\mu$ represents a multiplicative linear functional on $\mathcal{F}$. Thus, $\mu$ must be a probability measure.

We refer to \cite{D, G, HR, SW} for the basic information about $w^*$-Dirichlet algebras. In particular, a discussion of the facts listed below can be found in these sources. It should be noted that originally the term referred to an arbitrary uniform algebra whose $w^*$-closure in $L^{\infty}(\mu)$ is $w^*$-Dirichlet in our sense. However, the entire action develops in this $w^*$-closure in any case, so we prefer to simplify the terminology.

Let $H^p$ stand for the $L^p(\mu)$-closure of  $\mathcal{F}$ (earlier, we denoted the same object by $\mathcal{F}_p$), then the orthogonal projection of $L^2(\mu)$  onto $H^2$ (to be called the \textit{Riesz projection}) is bounded on $L^p(\mu)$ for every $p\in (1,\infty)$ and is of weak type $(1,1)$ (i.e., satisfies \eqref{weaktype}). Next, for every real square integrable function $u$ there is a unique real square integrable function $v$ such that $\int_X vd\mu=0$ and $u+iv\in H^2$. The mapping $u\mapsto v$ is called the \textit{harmonic conjugation operator}. In fact, it maps $L^p(\mu)$ boundedly onto $H^p$ for every  $p\in (1,\infty)$ and is of weak type $(1,1)$. Surely, these properties are equivalent to the corresponding properties of the Riesz projection.

However obvious is the proof of Condition $(\alpha_p)$ in the simplest cases (see formula \eqref{trifle}),  besides the $L^p$-boundedness of harmonic conjugation it involves also approximation of a \textit{positive} function by a.e. bounded \textit{positive} functions whose harmonic conjugates are also bounded a.e. Fortunately, in the case of $w^*$-Dirichlet algebras this approximation condition is fulfilled automatically. We shall prove an even stronger statement.

Let $\mathcal{G}$ be a $w^*$-closed linear subspace of $L^{\infty}(\mu)$, where $\mu$ is a finite measure. We suppose  that $G=\re\mathcal{G}$ is norm-dense in the space of real-valued $\mu$-integrable functions. Next, assume that there exists a linear operator $H$ defined on $\re\mathcal{G}$ and such that
$u+iHu\in\mathcal{G}$
for every $u\in\re\mathcal{G}$. (Note that, again, $H$ is a sort of ``harmonic conjugation'' but now we do not insist a real function $v$ with $u+iv\in\mathcal{G}$ be unique up to a constant summand.) Moreover, we assume that for some $p\in (1,\infty)$ the operator $H$ is bounded on $L^p(\mu)$ and its adjoint $H^*$ is of weak type (1,1). 

\begin{lemma}\label{abstract} Under the above assumptions, $\mathcal{G}$ satisfies Condition $(\alpha_p)$.
\end{lemma}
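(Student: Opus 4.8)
\textit{Proof plan.}
The idea is to look for the functions demanded by Condition $(\alpha_p)$ in the special form $w_n=v_n+iHv_n$ with $v_n\in G:=\re\mathcal G$ and $v_n\ge 0$. Such a $w_n$ automatically belongs to $\mathcal G$ (by the defining property of $H$), has nonnegative real part $v_n$, lies in $L^\infty(\mu)$, and, by the assumed $L^p$-boundedness of $H$, satisfies $\|w_n\|_{L^p}\le(1+\|H\|_{L^p\to L^p})\,\|v_n\|_{L^p}$. So the lemma reduces to the following: \emph{for each $0\le u\in L^p(\mu)$ there exist $v_n\in G$ with $v_n\ge 0$, $\|v_n\|_{L^p}\le C'\|u\|_{L^p}$, and $v_n\to u$ weakly in $L^p(\mu)$}, where $C'$ depends only on $p$ (through $\|H\|_{L^p\to L^p}$ and the weak type $(1,1)$ constant of $H^*$).

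I would first reduce to bounded $u$. For $u^{(N)}=\min(u,N)$ one has $u^{(N)}\to u$ in $L^p(\mu)$; granting the reduced assertion for each $u^{(N)}$ with the uniform bound $\|v_n\|_{L^p}\le C'\|u^{(N)}\|_{L^p}\le C'\|u\|_{L^p}$, a single working sequence for $u$ is extracted by a diagonal argument performed inside the closed subspace $Z\subseteq L^p(\mu)$ generated by $u$ and all the approximants: $Z$ is separable and reflexive, hence has separable dual, so the weak topology of $L^p(\mu)$ is metrizable on bounded subsets of $Z$.

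The substance of the lemma is then the construction of the positive approximants for a bounded $u\ge 0$. The density of $G$ in $L^1_{\mathbb R}(\mu)$ furnishes crude approximants $v\in G$ with $\|v-u\|_{L^1}$ arbitrarily small; such a $v$ is $L^1$-close to its truncation $\min(\max(v,0),\|u\|_\infty)$, which lies between $0$ and $\|u\|_\infty$, so one may push $v$ towards nonnegativity by subtracting from it a further element of $G$ approximating the discrepancy and iterating, with the successive corrections kept summable in $L^1(\mu)$. The non-trivial point---and precisely the place where the weak type $(1,1)$ of $H^*$ is needed---is to run this correction while keeping the $L^p$-norms of the $v$'s (equivalently, of their analytic completions $v+iHv$) bounded by a multiple of $\|u\|_{L^p}$; since $\mathcal G$ carries neither truncation nor a functional calculus, the Calder\'on--Zygmund decomposition used for Statements i) and ii) is unavailable, and its role is played by the weak type estimate for $H^*$, used dually as a Kolmogorov-type bound that keeps the corrections from blowing up in $L^p$. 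It is perhaps clarifying that, by convexity and Hahn--Banach, the reduced assertion is equivalent to the estimate
\[
\sup\Bigl\{\int v\,\phi\,d\mu:\ v\in G,\ v\ge 0,\ \|v\|_{L^p}\le 1\Bigr\}\ \ge\ c\,\|\phi_+\|_{L^q}\qquad(\phi\in L^q_{\mathbb R}(\mu)),
\]
and it is this inequality that the $L^1$-density together with the weak type bound serve to establish. Once the $v_n$ are in hand, their boundedness in $L^p(\mu)$ turns $L^1$-closeness to $u$ into weak $L^p$-convergence, and a final diagonalization over the approximation parameters completes the proof.

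The main obstacle is thus to enforce \emph{at the same time} the nonnegativity of $\re w$, the membership $w\in\mathcal G$, and the $L^p$-bound, with no truncation or functional-calculus tool inside $\mathcal G$ at one's disposal. In the $w^*$-Dirichlet situation this is bypassed algebraically: the (exact) analytic completion $g$ of $u$ has $\re g=u\ge 0$, and composing it with the bounded self-maps $z\mapsto(1-e^{-\delta z})/\delta$ of the right half-plane---which force the real part to stay $\ge 0$ and, since $|1-e^{-\delta z}|\le\delta|z|$ for $\re z\ge 0$, leave the $L^p$-norm essentially intact---produces the required sequence as $\delta\to 0$. The content of Lemma~\ref{abstract} is that this device can be traded for the soft, purely duality-theoretic use of the weak type $(1,1)$ of $H^*$ and the $L^1$-density of $\re\mathcal G$.
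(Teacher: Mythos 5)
Your plan correctly isolates the task: exhibit $v_n\in G=\re\mathcal G$ with $v_n\ge 0$, $\|v_n\|_{L^p}\le C\|u\|_{L^p}$, and $v_n\to u$ weakly in $L^p(\mu)$, since then $w_n=v_n+iHv_n\in\mathcal G$ satisfies Condition $(\alpha_p)$ by the $L^p$-boundedness of $H$. The dual reformulation you write down is a correct restatement of this. But the argument stops exactly where it must actually produce the $v_n$: you assert that the weak type $(1,1)$ of $H^*$ should serve "dually as a Kolmogorov-type bound that keeps the corrections from blowing up," yet you supply no mechanism that forces a nonnegative element of $G$, close to $u$, to have $L^p$-norm controlled by $\|u\|_{L^p}$ rather than by the (unbounded) quantities carried by the crude $L^1$-approximants; the obstacle you yourself name---that $\mathcal G$ admits no truncation or functional calculus---is precisely the obstacle your scheme leaves standing.

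The paper closes this gap with a sharp correction theorem from \cite{corr}. One equips $Y=\re\mathcal G$ with the norm $\|u\|_Y=\|u\|_\infty+\|Hu\|_\infty$, represents a functional on $Y$ as $u\mapsto\int u\bar A\,d\mu+\int (Hu)\bar B\,d\mu$ with $\|A\|_{L^1}+\|B\|_{L^1}\le 2\|\Phi_h\|_{Y^*}$, and deduces from the weak type $(1,1)$ of $H^*$ the estimate $\mu\{|h|>\lambda\}\le\lambda^{-1}\|\Phi_h\|_{Y^*}$ for $h=A+H^*B$. This estimate is precisely what triggers the correction theorem: for each bounded $g$ with $\|g\|_\infty\le 1$ and each $\varepsilon\in(0,1)$ there is a measurable $\varphi$ with values in $[0,1]$, differing from $g$ on a set of measure at most $\varepsilon$, such that $\|\varphi g\|_Y\le C(1+\log\varepsilon^{-1})$. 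Thus $\varphi g$ stays nonnegative, is $L^p$-close to $g$, and---this is the crux---both $\varphi g$ and $H(\varphi g)$ are bounded with $\|H(\varphi g)\|_{L^p}\le C\|\varphi g\|_{L^p}\le C\|u\|_{L^p}$. That correction step is the missing engine of your proof; without it (or an explicit substitute), the iterated-subtraction scheme and the dual inequality you formulate remain assertions rather than proofs. Finally, the composition with $z\mapsto(1-e^{-\delta z})/\delta$ that you mention for the $w^*$-Dirichlet case relies on a holomorphic functional calculus, hence on an algebra structure that $\mathcal G$ is not assumed to have, so it cannot stand in for the correction theorem at the stated level of generality.
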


Note that here $\mathcal{G}$ is not necessarily an algebra, but Condition $(\alpha_p)$ makes sense also for linear subspaces of $L^{\infty}(\mu)$. Thus, Lemma \ref{abstract} tells us that sometimes a condition like $(\beta_p)$ implies $(\alpha_p)$. We postpone the proof of the lemma till the end of this section, and now we give more examples.

\subsubsection{$w^*$-Limits of polynomials} Let $K$ be a compact subset of the complex plane with connected complement, let $X=\partial K$, and let $\mu$ be a probability measure on $X$ representing some point in the interior of $K$. Then the $w^*$-closure of 
all analytic polynomials in $L^{\infty}(\mu)$ is a $w^*$-Dirichlet algebra, see, e.g., \cite{G}.

\subsubsection{The algebra generated by a semigroup} Another classical example of a $w^*$-Dirichlet algebra is the algebra of all functions $f\in L^{\infty}(\mathbb{T}^2)$ with $\hat{f}(k,l)=0$ whenever $k+\omega l<0$ ($\omega$ being an irrational number). Here for $\mu$ we take the normalized Lebesgue measure on the $2$-dimensional torus. However, if $\omega$ is rational, this measure is not multiplicative on the corresponding algebra, a harmonic conjugate of a real function is not unique up to a constant summand, etc. In particular, this situation occurs for the algebra of functions analytic in one variable, though, as we have seen, our ``axioms'' for this algebra can easily be verified directly.

\subsubsection{Weights} Another case where we do not deal with a $w^*$-Dirichlet algebra is that of weighted measures. For example, on the circle there are nontrivial weights such the required continuity properties of the Riesz projection and harmonic conjugation operator remain true in the weighted norms. At the same time, the Lebesgue measure with a weight is quite rarely multiplicative on the corresponding algebra $H^{\infty}$. In this paper, the weights are mentioned only to indicate a source of examples, without a more serious study. Some additional information on weighted measures in the present context can be found in \cite{KZ, K1}.

\subsubsection{Modules over a $w^*$-Dirichlet algebra} Returning to a $w^*$-Dirichlet algebra $\mathcal{G}$, we recall that we also need modules over it as a ``raw material'' for Theorem \ref{main}. All such modules are known, see Theorem 2.2.1 in \cite{SW} and  Theorems 6.1 and 6.2 in Chapter V of \cite{G}. Basically, the interesting examples are of the form $F\mathcal{G}$, where $F$ is a measurable function with $|F|=1$ a.e. We return to the beginning of Subsection \ref{known} and, on the unit circle with normalized Lebesgue measure, consider the algebras $A=H^{\infty}(\mathbb{T})$ and $B=\overline{A}$. Next, we let $C=A$ and $D=FB$, where this time $F$ is a  unimodular function that is not necessarily analytic. It is easy to realize that an arbitrary ``interesting'' couple of modules over $A$ and $B$ (respectively) can be reduced to this one.

Now, if $F$ is inner, i.e., belongs to $A$, we recover the situation already discussed. However, we do not know if the quad $\{A, B, C, D,\}$ satisfies our ``axioms'' in the case of a general $F$ and, apparently, the question is difficult. By way of example, we only indicate a setting in which it is resolved in the positive, at least sometimes. Specifically, let $F=\theta\Phi/\overline\Phi$, where $\Phi$ is an outer function (see, e.g., \cite{Hof} for the definition) and $\theta$ is a ``genuine'' inner function. Clearly, $(\alpha_p)$ is not interesting in this situation, so we must think only about Conditions $(\beta_p)$ and $(\gamma_p)$. We have  $C^{\perp}=\bar{z}\overline{H^1(\mathbb{T})}$ and  $D^{\perp}=F zH^1(\mathbb{T})$, and it is clear that the ``bordered'' negative Riesz projection $v\mapsto\overline{\Phi}^{-1}P(v\overline{\Phi})$ should satisfy Condition $(\gamma_p)$, provided $(\beta_p)$ is fulfilled for it. The latter again reduces to certain weighted estimates for the usual Riesz projection. Apparently, some weighted results proved in \cite{KZ} can be adapted to give some information in the case in question.

\subsubsection{Algebras on product spaces} The last lines show, in particular, that ensuring Condition $(\gamma_p)$ may present a problem. However, there is a class of examples where this is easy. Namely, we can imitate the ``tensor product'' pattern of Statement i) in the Introduction, replacing the spaces $H^{\infty}$ in one variable by (say) two $w^*$-Dirichlet algebras on some measure spaces. Surely, some modules over them (again in one variable each) can be incorporated in an obvious way.

\subsection{Proof of Lemma \ref{abstract}} This is done by duality (a careful application of a separation theorem), but this will remain at the background, because we prefer to use a result already existing, see \cite{corr}. We endow the space $Y=\re\mathcal{G}$ with the norm $\|u\|_Y = \|u\|_{\infty}+\|Hu\|_{\infty}$, where $H$ is the operator introduced before the statement of the lemma. For any real function in $L^2(\mu)$, define a continuous linear functional $\Phi_h$ on $Y$ by the formula $\Phi_h(u)=\int_Xu\bar hd\mu$. If we prove the estimate
\begin{equation}\label{lorentz}
\mu\{|h|>\lambda\}\leq\lambda^{-1}\|\Phi_h\|_{Y^*},
\end{equation}
we are done.

Indeed, then by the main result of the \cite{corr}, for every $g\in Y$ with $L^{\infty}$-norm at most one and every $\varepsilon\in (0,1)$, there exists a measurable function $\varphi$ with values in $[0,1]$ such that  $\mu\{\varphi\ne g\}\leq\varepsilon$ and $\|\varphi g\|_Y\leq C(1+\log\varepsilon^{-1})$. Now, we take a nonnegative function $u\in L^p(\mu)$ to be approximated as in Condition $(\alpha_p)$. By rescaling and truncation, me may assume that it is approximated by a nonnegative function $g$ not exceeding $1$ a.e. within a prescribed precision, and moreover, $\|g\|_{L^p(\mu)}\leq\|u\|_{L^p(\mu)}$. Taking the above $\varphi$ for this $g$, we see that \textit{$\varphi g$ is still nonnegative} and can be made as close to $g$ in $L^p$ as we wish, $H(\varphi g)$ is a bounded function, and, finally, $\|H(\varphi g)\|_{L^p}\leq C\|\varphi g\|_{L^p}\leq C\|u\|_{L^p}$.

To prove inequality \eqref{lorentz}, it suffices to use the definition of the norm in $Y$ to represent $\Phi_h$ in the form
\[
\Phi_h(u)=\int \limits_X u \bar{ A} d\mu +\int \limits_X H(u) \bar{B} d\mu
\]
with some functions $A$ and $B$ satisfying $\|A\|_{L^1} + \|B\|_{L^1}\leq 2\|\Phi_h\|_{Y^*}$. Next, a slight modification of all functions involved allows us to assume that $A,B\in L^2(\mu)$ (no control of the $L^2$-norm is assumed), after which it is safe to write $h=A+H^*B$. So, the desired estimate follows form the weak type $(1,1)$ inequality for $H^*$.

\section{Proof of the main theorem}
\subsection{Cut-off functions}\label{cutoff} We start with a technical lemma to be used quite substantially in the proof. Let, as before, $\mathcal{E}$ be a $w^*$-closed subalgebra of $L^{\infty}(\mu)$ containing the constants and satisfying Condition $(\alpha_p)$. Let also $\gamma$ be a fixed positive integer (for definiteness, we assume that $\gamma\geq 2$). 

\begin{lemma}\label{outFunction}
Suppose that $\varphi\in L^{p}(\mu)$ and $\varphi \ge 1$. Then there exists a function $\Phi \in\mathcal{E}$ with the following properties{\rm:}
\begin{align*}
&\text{\rm(O1)} \quad \|1-\Phi\|_{L^p} \le C_{p,\gamma} \|1- \varphi\|_{L^p},       \\
&\text{\rm(O2)} \quad |\Phi| \le \frac{1}{|\varphi|^{\gamma}}.
\end{align*}
\end{lemma}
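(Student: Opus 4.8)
The plan is to build the cut-off function $\Phi$ directly from Condition $(\alpha_p)$ applied to a suitable power of $\varphi$. First I would set $u = \varphi^{\gamma}$ (or a closely related nonnegative $L^p$-function) — but one must be careful: $\varphi^{\gamma}$ need not lie in $L^p$, only $\varphi$ does. So instead I would work with $u = \varphi$ itself, or with a function comparable to $\log\varphi$, and then exponentiate. The natural construction mimics the classical outer function: if $w \in \mathcal{E}$ has $\re w \approx \gamma\log\varphi \ge 0$, then $\Phi = e^{-w}$ satisfies $|\Phi| = e^{-\re w} \le \varphi^{-\gamma}$, which is exactly (O2); and $\Phi \in \mathcal{E}$ because $\mathcal{E}$ is a $w^*$-closed algebra (so it is closed under taking $e^{-w}$ via power series, using $\re w \ge 0$ so that $\|e^{-w}\|_\infty \le 1$). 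The problem is that $\log\varphi$ need not be in $L^p$ near the set where $\varphi$ is large, so I cannot feed it straight into $(\alpha_p)$.

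To get around this I would use a standard trick: apply $(\alpha_p)$ not to $\log\varphi$ but to a bounded truncation, or better, exploit the elementary inequality $\log t \le t - 1$ and its companion $t - 1 \le$ (something controllable) to compare $\|\log\varphi\|$ with $\|\varphi - 1\|$. Concretely, I would apply Condition $(\alpha_p)$ (in its strengthened a.e.-convergent form noted right after its statement) to the nonnegative function $u = C(\varphi - 1)$ for an appropriate constant, obtaining $w_n \in \mathcal{E}$ with $\re w_n \ge 0$, $\re w_n \to u$ a.e. and in $L^p$, and $\|w_n\|_{L^p} \le C\|u\|_{L^p}$. Then I would set $\Phi_n = e^{-w_n}$. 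Property (O2)-type bound: since $\re w_n \ge 0$, one has $|\Phi_n| = e^{-\re w_n} \le 1$, and more precisely $|\Phi_n| \le e^{-\re w_n}$; using $\re w_n$ comparable to $C(\varphi-1) \gtrsim \log\varphi^{\gamma}$ where $\varphi$ is large, we get $|\Phi_n| \lesssim \varphi^{-\gamma}$ there, while where $\varphi$ is bounded this is automatic up to a constant — one absorbs the constant by choosing the multiplier in front of $\varphi - 1$ large enough, and replaces $\gamma$ by $\gamma$ plus a fixed integer if needed (the statement allows $C_{p,\gamma}$ to depend on $\gamma$). Property (O1): $|1 - \Phi_n| = |1 - e^{-w_n}| \le |w_n|$ pointwise when $\re w_n \ge 0$ (since $|1 - e^{-z}| \le |z|$ for $\re z \ge 0$), hence $\|1 - \Phi_n\|_{L^p} \le \|w_n\|_{L^p} \le C\|u\|_{L^p} = C'\|\varphi - 1\|_{L^p}$, which is (O1). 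Finally, pass to a limit: the $\Phi_n$ are uniformly bounded in $L^\infty$ by $1$, so a $w^*$-limit point $\Phi$ lies in $\mathcal{E}$ (which is $w^*$-closed), and the a.e. convergence of $\re w_n$ (after passing to a subsequence, and using the strengthened $(\alpha_p)$) gives a.e. convergence of $\Phi_n$, so (O1) and (O2) survive in the limit by Fatou.

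The main obstacle I anticipate is the bookkeeping with the constants and the integrability of $\log\varphi$. One cannot literally write $u = \gamma\log\varphi$ because that function may fail to be in $L^p$ (indeed $\varphi \in L^p$ forces only $\log\varphi \in L^p$-near-the-large-set but not globally comparable to anything clean), so the whole argument must be routed through $\varphi - 1$ with the inequalities $\log t \le t - 1$ for $t \ge 1$ and a one-sided reverse bound like $\log t \ge c(t-1)$ valid only on a bounded range of $t$ — meaning the exponent $\gamma$ in (O2) can only be matched on the region where $\varphi$ is large (precisely where $\varphi^{-\gamma}$ is small and the bound is most needed), and on the complementary region where $\varphi$ is bounded one must instead use $|\Phi_n| \le e^{-\re w_n}$ together with a crude lower bound, at the cost of enlarging the implied constant. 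Getting the constant $C$ in front of $\varphi - 1$ inside $(\alpha_p)$ chosen so that both (O1) and (O2) hold simultaneously — with (O2) forcing $C$ large and (O1) then producing $C_{p,\gamma}$ proportional to that large $C$ — is the delicate point, but it is purely a matter of elementary estimates for $t \mapsto \log t$ and $z \mapsto e^{-z}$ on $\re z \ge 0$, with no real analysis difficulty beyond the $w^*$-compactness limiting argument already sanctioned by the strengthened form of $(\alpha_p)$.
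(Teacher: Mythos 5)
Your proposal is correct and follows essentially the same route as the paper: apply Condition $(\alpha_p)$ (in its strengthened a.e.\ form) to a multiple of $\varphi-1$, and build the cut-off from the resulting approximants $w_n$ with $\re w_n\ge 0$, passing to an a.e.\ and $w^*$-limit at the end. The only real difference is the choice of half-plane-to-disk map --- the paper uses $\Phi_n=(1+w_n)^{-\gamma}$ (invertibility via Gelfand theory, and the exact identity $(1+u)^{-\gamma}=\varphi^{-\gamma}$ for (O2)), while you use $\Phi_n=e^{-w_n}$ with $u=\gamma(\varphi-1)$ (membership in $\mathcal{E}$ via power series and norm-closedness, $|1-e^{-z}|\le|z|$ for $\re z\ge 0$ for (O1), and $\log\varphi\le\varphi-1$ for (O2)); that last inequality already yields (O2) on the nose for all $\varphi\ge 1$, so the hand-wringing in your final paragraph about enlarging $\gamma$ or region-by-region constants is unnecessary.
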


\begin{proof}
We put $u = \varphi -1$, then  $u \ge 0$ and, applying Property $(\alpha_p)$ and the observation after it, we can find $w_n\in\mathcal{E}$ that have nonnegative real parts and converge in $L^p(\mu)$ and a.e. to some function $v$ with $\re v=u$. Moreover, we have $\|w_n\|_{L^p}\leq C\|u\|_{L^p}$ with $C$ independent of $u$.

Now, the functions $1+w_n$ are invertible in $\mathcal{E}$ by the Gelfand theory. Indeed, every nonzero multiplicative linear functional on $\mathcal{E}$ is representable by a probability measure on the maximal ideal space for $L^{\infty}(\mu)$, whence we see that the spectrum of $1 + w_n$ lies in the half-plane $\{\re z\geq 1\}$, hence does not contain the point $0$. Clearly, $\|(1+w_n)^{-1}\|_{\mathcal{E}}=\|(1+w_n)^{-1}\|_{L^{\infty}(\mu)}\leq 1$.

Thus, the function $\Phi_n$ defined by the formula
\begin{equation*}
 \Phi_n = \frac{1}{(1+w_n)^{\gamma}},
\end{equation*}
belongs to $\mathcal{E}$ and has norm at most $1$ in this algebra or in $L^{\infty}(\mu)$. Next, we have
\begin{equation*}
 |1 - \Phi_n| \le C_{\gamma} \left|1 - \frac{1}{1 + w_n}\right| \le C_{\gamma} \left|w_n\right|,
\end{equation*}
whence we deduce that
\begin{equation}\label{Phi_n_1}
 \|1-\Phi_n\|_{L^p} \le C_{p,\gamma} \|1- \varphi\|_{L^p}.
\end{equation}
Clearly, we also have the pointwise estimate
\begin{equation}\label{Phi_n_2}
 |\Phi_n| \le \frac{1}{|1 + \re w_n|^{\gamma}}.
\end{equation}
Now, the choice of the $w_n$ shows that the functions  $\Phi_n$ converge a.e. to some function $\Phi$, which belongs to the unit ball of $\mathcal{E}$ because the convergence is also in the $w^*$-topology of $L^{\infty}(\mu)$ (and, by the way, also in $L^p(\mu)$).

Since $\re w_n\to u$ a.e., the limit passage in \eqref{Phi_n_2} yields (O2). At the same time, (O1) follows by the limit passage in \eqref{Phi_n_1}.
\end{proof}
We give immediately an application of this construction. The lemma presented below will be used as a substitute for a procedure employed originally in the proofs of Statements i) and ii) (see the Introduction) and based on Calder{\'o}n--Zygmund decomposition. To make the claim closer to the setting in which it will be applied, we consider an algebra $\mathcal{E}$ as above, a subspace $\mathcal{F}$ of $L^{\infty}(\mu)$ that is a module over $\mathcal{E}$, and a number $p\in (1,\infty)$. We denote by $q$ the exponent conjugate to $p$, and  use the spaces $\mathcal{F}^{\perp}$ and $\mathcal{F}^{\perp,q}$ introduced before the definition of Property $(\beta_p)$. We also need the closure $\mathcal{Q}$ of $\mathcal{F}^{\perp}$ in the Lorentz space $L^{1,\infty}(\mu)$.
Recall that, clearly, $\mathcal{F}^{\perp}$, $\mathcal{F}^{\perp,q}$, and $\mathcal{Q}$ are modules over $\overline{\mathcal{E}}$.

\begin{lemma}\label{split}
Suppose that $f\in\mathcal{Q}$ and $\lambda>0$. Then there exist functions  $a\in\mathcal{F}^{\perp,q}$ and $b\in\mathcal{Q}$, as well as a set $E$ such that $f = a + b$ and
\begin{gather}
\|a\|_{\mathcal{F}^{\perp,q}} \le C  \lambda^{1/p} \|f\|^{1/q}_{\mathcal{Q}},\label{U1}\\
\int_{X\setminus E} |b| d\mu  \le C \|f\|_{\mathcal{Q}},\label{U2}\\
\mu(E) \le C \lambda^{-1}\|f\|_{\mathcal{Q}},\label{U3}\\
\mu\{|b| > t\} \le C t^{-1}\|f\|_{\mathcal{Q}},\,\,\,\,\,t>0.\label{U4}
\end{gather}
\end{lemma}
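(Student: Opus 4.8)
The plan is to build the decomposition $f = a + b$ by a stopping-time-free analog of the Calder\'on--Zygmund argument, using the cut-off functions from Lemma~\ref{outFunction} in place of the bad-set construction. Normalize so that $\|f\|_{\mathcal{Q}} = 1$. Since $f \in \mathcal{Q} \subset L^{1,\infty}(\mu)$, the maximal-function-type set where $f$ is large is small: put $E = \{x : M f(x) > c\lambda\}$ for a suitable weak-type maximal operator (or, more elementarily in this abstract setting, simply a set controlling $|f|$), so that $\mu(E) \le C\lambda^{-1}$, giving \eqref{U3}. The good function $b$ should be essentially $f$ multiplied by an analytic cut-off that kills it on $E$; the bad function $a = f - b$ is then supported near $E$ and we must show it lies in $\mathcal{F}^{\perp,q}$ with the stated norm bound \eqref{U1}.

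The key step: apply Lemma~\ref{outFunction} to a function $\varphi \ge 1$ that is $\approx 1$ off $E$ and large ($\approx \lambda/\|f\|$-ish, or rather chosen so $|\varphi|^{-\gamma}|f| \in L^q$ with the right norm) on $E$; e.g. take $\varphi = 1 + \lambda^{1/q}\|f\|$-type truncation, more precisely $\varphi$ comparable to $1 + (|f|/\lambda^{?})\chi_E$ adjusted so that $\|1-\varphi\|_{L^p} \le C$. This yields $\Phi \in \overline{\mathcal{E}}$ (working with $\overline{\mathcal{E}}$ since $\mathcal{F}^\perp$ is a module over $\overline{\mathcal{E}}$) with $\|1-\Phi\|_{L^p} \le C_{p,\gamma}\|1-\varphi\|_{L^p}$ and $|\Phi| \le |\varphi|^{-\gamma}$. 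Now set $b = \Phi f$ and $a = (1-\Phi) f$. Because $\Phi \in \overline{\mathcal{E}}$ and $\mathcal{Q}$ is a module over $\overline{\mathcal{E}}$, we get $b \in \mathcal{Q}$; similarly $a \in \mathcal{F}^\perp$, and once we check $a \in L^q$ it lands in $\mathcal{F}^{\perp,q}$. The bound \eqref{U1} on $\|a\|_{L^q}$ follows by splitting: off $E$, $|1-\Phi|$ is small in $L^p$ and $|f|$ is bounded by $\sim \lambda$, so H\"older-type interpolation against the $L^{1,\infty}$ control of $f$ gives the exponent split $\lambda^{1/p}\|f\|^{1/q}$; on $E$, $|a| \le |f| + |\Phi f| \le |f|(1 + |\varphi|^{-\gamma})$ and the $L^q$ norm is again controlled by combining $\mu(E) \le C\lambda^{-1}\|f\|$ with the pointwise bounds. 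The estimates \eqref{U2} and \eqref{U4} for $b = \Phi f$: since $|\Phi| \le 1$ we have $\mu\{|b|>t\} \le \mu\{|f|>t\} \le t^{-1}\|f\|_{\mathcal Q}$, which is \eqref{U4}; for \eqref{U2}, off $E$ one has $|b| \le |f|$ and $|f|$ is bounded there, while the small measure of $\{|f| \text{ large}\}\setminus E$ is absorbed — in fact taking $E$ itself to be the large set of $b$ makes \eqref{U2} the statement that $\int_{\{|b|\le c\lambda\}} |b| \le C\|f\|_{\mathcal{Q}}$, a standard truncation estimate for an $L^{1,\infty}$ function.

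I expect the main obstacle to be the precise choice of the auxiliary function $\varphi$ so that simultaneously (i) $\|1-\varphi\|_{L^p}$ is bounded by an absolute constant (so that (O1) gives $\|1-\Phi\|_{L^p} \le C$), (ii) the pointwise bound $|\Phi| \le |\varphi|^{-\gamma}$ is strong enough that $\Phi f \in \mathcal{Q}$ with good quantitative control and $(1-\Phi)f \in L^q$ with the sharp split in \eqref{U1}, and (iii) all the $\lambda$ and $\|f\|_{\mathcal{Q}}$ powers come out exactly as $\lambda^{1/p}\|f\|^{1/q}$ and $\lambda^{-1}\|f\|$. This is a balancing act: one wants $\varphi \sim 1$ where $|f| \lesssim \lambda$ and $\varphi$ just large enough where $|f| \gtrsim \lambda$ that $|\varphi|^{-\gamma}|f|$ is both $L^q$-summable and small, exploiting that $\gamma \ge 2$ gives room. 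A clean way is to set $\varphi = \max(1, |f|/\lambda)^{1/\gamma}$ or a mollified/truncated variant; then $\{\varphi > 1\} \subset E$, $\|1-\varphi\|_{L^p}^p \le \int_E (|f|/\lambda)^{p/\gamma}$ which is finite and bounded using the $L^{1,\infty}$ bound and $p/\gamma < 1$ when $\gamma$ is large enough relative to $p$ (this is exactly why we are free to fix $\gamma$ large), and $|\Phi f| \le |\varphi|^{-\gamma}|f| \le \lambda$ pointwise, which immediately yields a uniform bound feeding \eqref{U2} and \eqref{U4}. Once $\varphi$ is pinned down, everything else is H\"older's inequality, the module property over $\overline{\mathcal{E}}$, and bookkeeping of exponents.
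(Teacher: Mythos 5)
Your overall strategy is right---replacing the Calder\'on--Zygmund bad-set construction by the analytic cut-off $\Phi$ from Lemma~\ref{outFunction} applied over $\overline{\mathcal{E}}$, with $\varphi=\max\bigl\{1,(|f|/\lambda)^{1/\gamma}\bigr\}$ (your final guess is exactly the paper's choice), and $E=\{|f|>\lambda\}$. But you have assigned $\Phi f$ and $(1-\Phi)f$ to the wrong roles, and this breaks both \eqref{U1} and \eqref{U2}. Since $|\Phi|\leq\min\{1,\lambda/|f|\}$, it is $\Phi f$ that is pointwise bounded by $\lambda$ and hence lies in $L^q(\mu)$, so the $L^q$ piece must be $a=\Phi f$, not $b$. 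Your $a=(1-\Phi)f$ is \emph{not} in $L^q$ in general: on $E$ you only have $|1-\Phi|\leq 2$, and $|f|$ is merely $L^{1,\infty}$ (or $L^1$ after the reduction to $f\in\mathcal{F}^{\perp}$), so $\int_E |(1-\Phi)f|^q$ can diverge; your claimed bound ``$|a|\leq|f|(1+|\varphi|^{-\gamma})$ on $E$, then use $\mu(E)$'' does not produce an $L^q$ estimate.

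The second gap is in \eqref{U2}. With $b=\Phi f$ you invoke the ``standard truncation estimate'' $\int_{\{|b|\leq c\lambda\}}|b|\,d\mu\leq C\|f\|_{L^{1,\infty}}$, but this is false: truncating an $L^{1,\infty}$ function below level $\lambda$ yields an $L^1$ norm that grows like $\log\lambda$ (this is precisely the quasi-norm/non-normability pathology of $L^{1,\infty}$). The paper's mechanism for \eqref{U2} is different and essential: with $b=(1-\Phi)f$ and $E=\{|f|>\lambda\}$, on $X\setminus E$ one has $b=(1-\Phi)\alpha$ where $\alpha=\min\{\lambda,|f|\}\sgn f$ is the truncation, and then H\"older gives $\int_{X\setminus E}|b|\leq\|1-\Phi\|_{L^p}\|\alpha\|_{L^q}\leq C\|f\|_{\mathcal{Q}}$, using the smallness $\|1-\Phi\|_{L^p}\leq C\|f\|_{\mathcal{Q}}^{1/p}\lambda^{-1/p}$ to cancel the $\lambda^{1/p}$ from $\|\alpha\|_{L^q}$. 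The factor $1-\Phi$, absent from your $b$, is what makes the integral small. Once you swap back to $a=\Phi f$ and $b=(1-\Phi)f$, the estimates \eqref{U1}--\eqref{U4} go through exactly along the lines you sketch; you should also note the paper's preliminary reduction of a general $f\in\mathcal{Q}$ to $f\in\mathcal{F}^{\perp}$ by a rapidly convergent series, which is needed so that $\Phi f$ and $(1-\Phi)f$ actually land in $\mathcal{F}^{\perp}$ via the module property.
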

\begin{proof} We shall employ an ``analytic'' cut-off function provided by Lemma \ref{outFunction}, but before that it is useful to cut $f$ into two pieces crudely (by truncation). Specifically, we write first $f = \alpha + \beta$,  where
$$
\alpha = \min\{\lambda, |f|\} \sgn f, \,\,\, \beta = f - \alpha, \,\,  \text{ where } \sgn f = \frac{f}{|f|},\,\,\,0/0=0.
$$
Clearly, $\beta$ is supported on the set $e=\{|f|>\lambda\}$ and
$$
 \mu (e) \le  \frac{\|f\|_{L^{1,\infty}}}{\lambda}.
$$
Next, it is also clear that $\|\alpha\|_{L^{1,\infty}},\,\|\beta\|_{L^{1,\infty}}\leq\|f\|_{L^{1,\infty}}$.
Now, it is easily seen that $\|\alpha\|_{L^q}\leq C \lambda^{1/p} \|f\|^{1/q}_{L^{1,\infty}}$.

For completeness, we reproduce the standard calculation leading to the last estimate. Consider the distribution function $\sigma(t) = \mu\{x: |\alpha(x)| > t \}$ for $\alpha$. Then $\sigma(t) = 0$ for $t > \lambda$ and $\sigma(t) \le \frac{\|f\|_{L^{1,\infty}}}{t}$. Now, we have
$$
\|\alpha\|^q_{L^q} = q \int\limits_0^{\infty} t^{q-1} \sigma(t)  dt \le C \|f\|_{L^{1,\infty}} \int\limits_{0}^{\lambda} t^{q-2} dt \le C \lambda^{q-1} \|f\|_{L^{1,\infty}},
$$
and the claim follows. Note that, surely, for this calculation we only need that $f\in L^{1,\infty}(\mu)$.

Now, suppose that we are given a function $f\in\mathcal{Q}$, and we want to split it is claimed in Lemma \ref{split}. Compared to the classical situations of Hardy spaces on the disk, we encounter here a tiny technical difficulty related to the possible absence of what is called ``the boundary maximum principle''. Specifically, if a function in $\mathcal{Q}$ happens to be integrable, it is not clear \textit{a priori} whether this function belongs to $\mathcal{F}^{\perp}$. To circumvent this difficulty, we first assume that $f\in\mathcal{F}^{\perp}$ (no control of the norm of $f$ in this space is assured, we shall only use the quantity $\|f\|_{\mathcal{Q}}$). If the lemma is proved for such $f$, then the general case also follows easily. Indeed, we represent an arbitrary  $f\in\mathcal{Q}$ in the form $f=\sum_{j\ge 0}f_j$ where $f_j\in\mathcal{F}^{\perp}$ for all $j$ and the norms of these functions in $L^{1,\infty}$ tend to zero very quickly. Then we split each summand $f_j$ as claimed in the lemma. The corresponding number $\lambda=\lambda_j$ changes with $j$; the $\lambda_j$ also should tend to zero quickly, but much slower than the norms of the $f_j$. In fact, some geometric rates would suffice in the two cases. Finally, we sum the splittings for $f_j$ over $j$. The adjustment of the details is left to the reader. It should also be noted that, in fact, in the sequel we shall only need the case where $f\in\mathcal{F}^{\perp}$  (and even $f\in\mathcal{F}^{\perp,q}$), see Subsection \ref{duality}.

So, in what follows we work with $f\in\mathcal{F}^{\perp}$. To begin with, we split $f$ crudely by truncation as described above: $f=\alpha+\beta$, etc. We are going to apply Lemma \ref{outFunction} to the function
$$
\varphi = \max\left\{1,\left(\frac{|f|}{\lambda}\right)^{1/\gamma}\right\},
$$
where $\gamma$ is a positive integer strictly greater than $p$. First, we show that
\begin{equation}\label{phi_one}
    \|\varphi-1\|_{L^p} \le C \|f\|_{1,\infty}^{1/p} \lambda^{-1/p}.  
\end{equation}
Indeed, the left-hand side does not exceed the quantity
\begin{equation}\label{norme}
 \left( \int\limits_{|f| > \lambda} \frac{|f|^{p/\gamma}}{\lambda^{p/\gamma}} \right)^{1/p} = \frac{C}{\lambda^{1/\gamma}} \left( \int\limits_X|f \chi_{\{|f|^{1/\gamma}>\lambda^{1/\gamma}\}}|^{p/\gamma}\right)^{1/p},
\end{equation}
where $\chi_c$ stands for the indicator function of a set $c$. Denoting by $\sigma$ the distribution function for $f \chi_{\{|f|^{1/\gamma}>\lambda^{1/\gamma}\}}$, we have
\begin{equation*}
    \sigma(t) = \begin{cases} \mu\{|f|^{1/\gamma} > \lambda^{1/\gamma}\},\quad &t\in[0,\lambda^{1/\gamma});\\
                  \mu\{|f|^{1/\gamma} > t\},\quad &t\in [\lambda^{1/\gamma},\infty).
    \end{cases}
\end{equation*}
Rewriting the right-hand side of \eqref{norme} in terms of this distribution function, we majorize it by the quantity
\begin{gather*}
\frac{C}{\lambda^{1/\gamma}} \left( \left(\, \int\limits_{\lambda^{1/\gamma}}^{\infty} t^{p-1}\,\, \mu\{x : |f(x)|^{1/\gamma} > t\} \, dt \right)^{1/p} + \left( \int\limits^{\lambda^{1/\gamma}}_{0} t^{p-1}\,\, \mu\{x : |f(x)|^{1/\gamma} > \lambda^{1/\gamma}\} \, dt \right)^{1/p} \right)\\
\le \frac{C}{\lambda^{1/\gamma}} \left( \left( \|f\|_{L^{1,\infty}} \int\limits_{\lambda^{1/\gamma}}^{\infty} t^{p-1 -\gamma} \, dt \right)^{1/p} + \left( \frac{\|f\|_{L^{1,\infty}}}{\lambda} \int\limits^{\lambda^{1/\gamma}}_{0} t^{p-1}\, dt \right)^{1/p} \right)\\
\le  \frac{C \|f\|_{1,\infty}^{1/p}}{\lambda^{1/\gamma}} \left( \lambda^{(p-\gamma)/(p\gamma)} +  \lambda^{(p/\gamma -1)/p} \right) = C \|f\|_{1,\infty}^{1/p} \lambda^{-1/p}.
\end{gather*}
In the course of the proof, we have used the inequality $p-1-\gamma < -1$, which is precisely our choice of $\gamma$.

Now, we apply Lemma \ref{outFunction} to $\varphi$ \textit{and the algebra} $\overline{\mathcal{E}}$, obtaining a function $\Phi \in\overline{\mathcal{E}}$ such that
\begin{gather}
 |\Phi| \le \min\left\{1, \left| \frac{\lambda}{f} \right|\right\},\notag\\
\label{Phi_phi}
 \|1 - \Phi\|_{L^p} \le C \|\varphi - 1 \|_{L^p}\le C \|f\|_{1,\infty}^{1/p} \lambda^{-1/p}.
\end{gather}
We show that the required decomposition of $f$ can be given simply by $f = \Phi f + (1- \Phi) f$.
To prove the due estimates for the functions $a=\Phi f$ and $b= (1- \Phi) f$, we shall use the ``crude'' decomposition $f=\alpha+\beta$ described above.

We observe that, by our assumptions about $f$, we clearly have $a,b,\in\mathcal{F}^{\perp}$ because the last space is a module over $\overline{\mathcal{E}}$. In order to ensure that $a\in\mathcal{F}^{\perp,q}$, it only suffices to prove the norm estimate $\|a\|_{L^q(\mu)}\leq C\lambda^{1/p}\|f\|_{\mathcal{Q}}^{1/q}$ (see \eqref{U1}) because $\mathcal{F}^{\perp,q}=\mathcal{F}^{\perp}\cap L^q(\mu)$ by definition. Now, recalling the notation $e=\{|f|>\lambda\}$ and the fact that $\beta$ is supported on $e$, we have
\begin{gather*}
\|a\|_{L^q} \le \| \Phi \alpha\|_{L^q} +\|\Phi \beta\|_{L^q} \le \|\alpha\|_{L^q} +
\left(\int\limits_{e} \frac{\lambda^q}{|f|^q}|f|^q d \mu \right)^{1/q} \\
\le C \lambda^{1/p} \|f\|_{\mathcal{Q}}^{1/q} + \lambda \mu(e)^{1/q}
\le C \lambda^{1/p} \|f\|_{\mathcal{Q}}^{1/q},
\end{gather*}
and \eqref{U1} is proved.

Next, we define $E=e=\{|f|>\lambda\}$, then \eqref{U3} and \eqref{U4} are clear. To verify \eqref{U2}, we write
$$
\int\limits_{X\setminus E} |b| \le  \int\limits_{X\setminus E} |1- \Phi||\alpha| \le \|\alpha\|_{L^q} \|1-\Phi\|_{L^p},
$$
and the required inequality follows from \eqref{Phi_phi} and the estimate for $\|\alpha\|_{L^q}$ obtained above.

\end{proof}

\subsection{End of the proof: duality and another cut-off\label{duality}} We pass to the proof itself of Theorem \ref{main}. Recall that we are given two algebras $A$ and $B$, two modules $C$ and $D$, and a number $p\in(1,\infty)$, and we must prove that the couple $(C_p\cap D_p, C\cap D)$ is $K$-closed in $(L^p(\mu), L^{\infty}(\mu))$. Let $q$ be the exponent conjugate to $p$. It is well known  (see \cite{P} or the survey \cite{K2}) that the question is equivalent to the $K$-closedness of the couple of annihilators of the spaces in question in the preduals. That is, our task is to show that the couple
\[
\big(\clos (C^{\perp}+D^{\perp}), \clos(C^{\perp,q}+D^{\perp,q})\big)
\]
is $K$-closed in $(L^1(\mu), L^q(\mu))$. The closures in the above display are taken in $L^1(\mu)$ and $L^q(\mu)$, respectively. Note that the spaces $C^{\perp}+D^{\perp}$ and $C^{\perp,q}+D^{\perp,q}$ are not necessarily closed themselves. However, we can forget about the operations of closure in what follows because, as is easily seen, when we verify the $K$-closedness for a couple $(U,V)$, it suffices to ensure the required decomposition only for a dense subset of $U+V$ (surely, we should also control the corresponding estimational constants in a uniform way). In our setting, even $C^{\perp,q}$ and $D^{\perp,q}$ are dense in $C^{\perp}$ and $D^{\perp}$, respectively, so we can comfortably restrict ourselves to the the solution of the following problem.

Suppose that $f\in C^{\perp,q} + D^{\perp,q}$ is represented in the form
\[
    f = g + h, \,\,\, g \in L^1,\, h \in L^q;
\]
find  $g_1\in C^{\perp} + D^{\perp}$ and $h_1\in C^{\perp,q} + D^{\perp,q}$ such that
\[
    f = g_1 + h_1, \,\,\, \|g_1\|_{C^{\perp} + D^{\perp}} \le C \|g\|_{L^1},\, \|h_1\|_{  (C^{\perp,q} + D^{\perp,q})} \le C \|h\|_{L^q}, 
\]
where $C$ depends only on $q$.

For brevity, we put $r = \|g\|_{L^1}$ and $s = \|h\|_{L^q}$. Next, denote by $\mathcal{Q}$ the closure of $C^{\perp}$ in $L^{1,\infty}(\mu)$. We remind the reader that, by the assumptions of the theorem, there is a projection that maps $L^q(\mu)$ boundedly onto $C^{\perp,q}$ and also $L^1(\mu)$ boundedly to $\mathcal{Q}$ (in particular, it is identical on $C^{\perp}$); moreover, it is assumed that $P(D^{\perp,q})\subset D^{\perp,q}$.

We apply Lemma \ref{split} to the function $Pg\in\mathcal{Q}$ (the ground algebra is $\overline{A}$ now). It should be noted that, though the quantity required for the subsequent calculations is $\|Pg\|_{L^{1,\infty}}\leq r$, in fact our choice of $f$ implies that $g\in L^q(\mu)$, consequently, also $Pg\in L^q(\mu)$. This observation will be of some use in the sequel, but now we write out the result of application of Lemma \ref{split}, in which we take $\lambda = r^{1/(1-q)} s^p$:
there exist $a \in C^{\perp,q}$, $b \in\mathcal{Q}$, and a set $E$ such that
\begin{gather}
\label{T1}  \|a\|_{C^{\perp,q}} \le C  \lambda^{1/p} \|P g\|^{1/q}_{\mathcal{Q}} \le C s, \\
\label{T2}  \int_{X\setminus E} |b| d\mu  \le C \|Pg\|_{\mathcal{Q}} \le C r, \\
\label{T3}  \mu(E) \le C \|P g\|_{\mathcal{Q}}\lambda^{-1} \le C   r^p s^{-p}, \\
\label{T4}   \mu\{|b| > t\} \le C \frac{\|P g\|_{\mathcal{Q}}}{t} \le C r t^{-1},\,\,\, t>0.
\end{gather}
Now, we define a function $u$ by the formula
\[
    u = (I - P)f = g+h-a-b-Ph.
\]
Since $I-P$ is zero on $C^{\perp,q}$ and takes $D^{\perp,q}$ into itself by assumption, we see that $u\in D^{\perp,q}$ because $f\in C^{\perp,q}+D^{\perp,q}$. Now we fix an integer $\gamma > p$ and introduce a function $\varphi$ by putting
\begin{equation*}
    \varphi = \max\left\{1,\left(\frac{(|g|+|b|)r^{1/(q-1)}}{s^p}\right)^{1/\gamma}\right\}.
\end{equation*}
We apply Lemma \ref{outFunction}, this time to $\overline{B}$ in the role of a ground algebra, to the exponent  $q$, and to $\varphi$. This yields yet another cut-off function function $\Phi\in\overline{B}$ with
\begin{equation}\label{main_theorem_Phi}
 |\Phi| \le \min\left\{1,  \frac{r^{1/(1-q)} s^{~p}}{|g|+|b|}\right\},\,\,\,
 \|1 - \Phi\|_{L^p} \le C \|\varphi - 1 \|_{L^p}.
\end{equation}
Finally, we define
\begin{equation*}
\psi = \Phi u -  h + P h  + a 
\end{equation*}
and claim that the required decomposition of $f$ is given by
\begin{equation*}
   f = g_1+h_1,\,\,\, g_1 = g - \psi, \, h_1 = h+\psi.
\end{equation*}
Now, $\Phi u\in D^{\perp,q}$, $Ph\in C^{\perp,q}$, and $a\in C^{\perp,q}$, whence we see that
$h_1=\Phi u+PH+a\in C^{\perp,q}+D^{\perp,q}$. Since $f$ also belongs to the last space, we deduce that $g_1\in C^{\perp,q}+D^{\perp,q}\subset C^{\perp}+D^{\perp}$. This means that it suffices to ensure only the norm estimates $\|g_1\|_{L^1(\mu)}\leq Cr$ and $\|h_1\|_{L^q(\mu)}\leq Cs$.

First, we estimate the quantity $\|g_1\|_{L^1(\mu)}$. By the definition of $\Phi$ and $u$, we obtain
\begin{gather}
 \|g_1\|_{L^1} = \|g-\psi\|_{L^1} \le C\|g\|_{L^1} + \|\Phi u - h + Ph+a\|_{L^1}\notag \\
    =  C\|g\|_{L^1} + \|\Phi g + \Phi h -\Phi a - \Phi b - \Phi Ph - h + Ph+a\|_{L^1} \notag \\
    \le C \|g\|_{L^1} + \|(1-\Phi)(Ph - h)\|_{L^1} + \|(1-\Phi)a\|_{L^1} + \|\Phi b\|_{L^1} \label{main_theorem_g_estimate} 
\end{gather}
We estimate separately the last  three summands in \eqref{main_theorem_g_estimate}. Using formulas \eqref{main_theorem_Phi}, \eqref{T2}, and \eqref{T3}, we arrive at the following inequality for the very last summand:
\begin{gather}
\label{main_theorem_phi_beta}    \|\Phi b\|_{L^1} = \int\limits_{E}|\Phi b| + \int\limits_{X\setminus E}|\Phi b| \le \int\limits_{E}
    \frac{|b|}{|g|+|b|}\frac{s^p}{r^{1/(q-1)}} + \int\limits_{X\setminus E}| b| \\
     \le \mu E \frac{s^p}{r^{1/(q-1)}} + C r \le Cr.\notag
\end{gather}
The remaining two summands are estimated with the help of the $\mathrm{H\ddot{o}lder}$ inequality, the continuity of $P$ in $L^q$, and relation \eqref{T1}: 
\begin{gather*}
    \|(1-\Phi)(Ph - h)\|_{L^1} \le C s \|1-\Phi\|_{L^p} ;\\
    \|(1-\Phi)a\|_{L^1} \le C \|1-\Phi\|_{L^p} \|a\|_{L^q} \le C s \|1-\Phi\|_{L^p}.
\end{gather*}
Thus, it suffices to show that
\begin{equation}\label{main_theorem_Phi_ab}
   \|1-\Phi\|_{L^p} \le C rs^{-1},
\end{equation}
By \eqref{main_theorem_Phi}, it suffices to estimate the quantity $\|\varphi - 1\|_{L^p}$. We introduce the distribution function  $\sigma$ for $\varphi \chi_{\{x:|\varphi(x)| > 1\}}$:
\begin{equation*}
    \sigma(t) = 
    \begin{cases} \mu \{|\varphi| > 1\},\quad &t\in[0,1];\\
                  \mu \{|\varphi| > t\} ,\quad &t\in (1,\infty).
    \end{cases}
\end{equation*}
Now, we have
\begin{equation}\label{main_theorem_phi_q}
\int\limits_X |\varphi - 1|^p \le \int\limits_X |\varphi|^p \chi_{\{x:|\varphi(x)| > 1\}} 
\le C \left(\int\limits_0^1{t^{p-1} \mu\{|\varphi| > 1\}} dt + \int\limits_1^{\infty} t^{p-1} \mu\{|\varphi| > t\} dt \right). 
\end{equation}
By \eqref{T4} and the Chebyshev inequality, we obtain
\begin{gather*}
    \mu\{|\varphi| > 1\} = \mu \{|g| + |b| > \lambda\} \le \mu \{|g|  > \lambda/2\} + \mu \{|b| > \lambda/2\} \le C r \lambda^{-1} = C r^p s^{-p};\\
    \mu\{|\varphi| > t\} = \mu\left\{\frac{|g| + |b|}{\lambda} > t^{\gamma}\right\} \le  \mu \{|g|  > t^{\gamma}\lambda/2\} + \mu \{|b| > t^{\gamma}\lambda/2\} \le  C r^{p}s^{-p}t^{-\gamma}.
\end{gather*}
Substituting these expressions in  \eqref{main_theorem_phi_q}, we see that
\begin{equation*}
\int\limits_{X} |\varphi - 1|^p \le  C r^p s^{-p} \left(\int\limits_0^1 t^{p-1} + \int\limits_1^{\infty} t^{p-1-\gamma}  dt \right) \le  C r^p s^{-p}. 
\end{equation*}
Thus, we have proved inequality \eqref{main_theorem_Phi_ab} and, with it, inequality \eqref{main_theorem_g_estimate}.

It remains to estimate the quantity $\|h_1\|_{L^q}$. Using the expressions for $\psi$ and $u$, we obtain
\begin{equation*}
    \|h_1\|_{L^q} =  \|h + \psi \|_{L^q} = \|\Phi u + P h +a\|_{L^q} = \|\Phi(g - b -a + h - Ph) + P h +a\|_{L^q}. 
\end{equation*}
Using \eqref{T1}  and the fact that $|\Phi| \leq 1$, we deduce that
\begin{equation}\label{main_theorem_h_Lp}
    \|h_1\|_{L^q} \le Cs + \|\Phi(g - b)\|_{L^q}.
\end{equation}
The second summand is dominated by
\begin{equation*}
  \left(\int\limits_{E} |\Phi|^q\left(|g| + |b|\right)^q d\mu \right)^{1/q} + \left(\int\limits_{X\setminus E} |\Phi|^q\left(|g| + |b|\right)^q d\mu \right)^{1/q} \le \ldots
\end{equation*}
We continue by using \eqref{main_theorem_Phi} to obtain
\begin{equation*}
\ldots  \le \lambda \mu(E)^{1/q} + \left( \int \limits_{X\setminus E} \lambda^{q-1}(|g|+|b|)\right)^{1/q}.
\end{equation*}
Next, we employ \eqref{T3} and \eqref{T2} to estimate the first and the second summand in the last expression (respectively), and then take the definition of $\lambda$ into account, to dominate the last quantity by 
\begin{equation*}
    s^p r^{1/(1-q)} r^{p/q}s^{-p/q} + C \lambda^{(q-1)/q}r^{1/q} \le C s.
\end{equation*}
This proves the required estimate for $\|h_1\|_{L^q}$ and, with it, the main theorem.

\end{document}